\documentclass{article}
\usepackage{amsmath,amssymb,amscd,amsthm,amsfonts}
\usepackage{graphicx} 
\usepackage{hyperref}
\usepackage{url}

\title{A note on realizing triples of slice, concordance and Seifert genera}
\author{Boning Wang}

\newtheorem{theorem}{Theorem}[section]
\newtheorem{lemma}{Lemma}[section]
\newtheorem{proposition}{Proposition}[section]

\begin{document}
\maketitle
\begin{abstract}
We show that every triple of integers $(a,b,c)$ satisfying
$1\leq a\leq b\leq c$ is realized as the smooth slice genus,
smooth concordance genus, and Seifert genus of a knot,
respectively. This proves a conjecture of Kearney.
\end{abstract}

\section{Introduction}

For a knot $K\subset S^3$, let $g_3(K)$ and $g_4(K)$ denote its
Seifert genus and smooth slice genus, respectively. Its smooth concordance genus is defined by
\[
g_c(K)=\min\{g_3(J)\mid J\text{ is smoothly concordant to }K\}.
\]
These invariants satisfy
\[
0\leq g_4(K)\leq g_c(K)\leq g_3(K).
\]

If $K$ is smoothly slice, then it is smoothly concordant to the
unknot, and hence $g_4(K)=g_c(K)=0$.
For nonslice knots, it is therefore natural to ask whether the
inequalities above are the only restrictions on the three genera.
More precisely, given any triple of integers $(a,b,c)$ satisfying
$1\leq a\leq b\leq c$, does there exist a knot $K$ such that
\[
\bigl(g_4(K),g_c(K),g_3(K)\bigr)=(a,b,c)?
\]
Kearney~\cite[Conjecture~1]{Kearney2014} conjectured that every
such triple can be realized.

Early examples of Casson and Nakanishi showed that the slice genus
and concordance genus need not agree; see~\cite{Livingston2004}.
Livingston~\cite[Theorem~1.5]{Livingston2004}
established the special case $(a,b,c)=(1,N,N)$ for every integer
$N\geq1$, constructing algebraically slice knots $K$ with
\[
g_4(K)=1,\qquad g_c(K)=g_3(K)=N.
\]
Kearney~\cite{Kearney2013} determined the concordance genera of
533 of the 552 prime knots with 11 crossings.

Using knot Floer homology,
Hom~\cite{Hom2015ConcordanceGenus} constructed topologically slice
knots with smooth slice genus one and arbitrarily large smooth
concordance genus.
More broadly, knot Floer homology has been used to obtain various
lower bounds for the smooth concordance genus, both directly and
through bounds on the slice genus; see, for example,
~\cite{Hom2015ConcordanceGenus,
Hom2016EpsilonUpsilon,Wang2016GenusFiltration,
OSS2017ConcordanceHomomorphisms,Livingston2017Upsilon,
KimLivingston2018SecondaryUpsilon,FellerParkRay2019UpsilonSatellites,
DHST2021MoreHomomorphisms}.

In this paper, we prove Kearney's conjecture.

\begin{theorem}\label{thm:main}
For every triple of integers $(a,b,c)$ satisfying
$1\leq a\leq b\leq c$, there exists a knot $K\subset S^3$ such that
\[
\bigl(g_4(K),g_c(K),g_3(K)\bigr)=(a,b,c).
\]
\end{theorem}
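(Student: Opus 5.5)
The plan is a connected-sum construction assembled from three pieces, each of which controls one genus. Lower bounds will come from concordance invariants that are additive, or at least behave well, under connected sum; upper bounds will come from explicit surfaces.

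\emph{Building blocks.} Let $T=T_{2,2a+1}$ be the torus knot, so that $g_4(T)=g_c(T)=g_3(T)=a$. Let $L$ be a topologically slice (or algebraically slice) knot with $g_4(L)=1$ and $g_c(L)$ large, of the kind constructed by Hom~\cite{Hom2015ConcordanceGenus} or Livingston~\cite[Theorem~1.5]{Livingston2004}. To pad the Seifert genus we use a knot $W$ that is smoothly slice but has $g_3(W)$ arbitrary, for instance a connected sum of copies of $J\#-J$ or a suitable ribbon knot. Since $W$ is concordant to the unknot, adding it changes neither $g_4$ nor $g_c$. The Seifert genus is additive under connected sum, so once $(a,b,b)$ is realized, summing with a slice knot of genus $c-b$ gives $(a,b,c)$. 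Thus the problem reduces to realizing $(a,b,b)$.

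\emph{Realizing $(a,b,b)$.} For $b=a$, take $T$ itself. For $b>a$, take $K=T_{2,2a-1}\#L$, where $L$ is chosen with $g_4(L)=1$ and $g_c(L)=g_3(L)=b-a+1$, in the style of Livingston's $(1,N,N)$ family.
\begin{itemize}
\item For $g_4$: we have $g_4(K)\le (a-1)+1=a$. Conversely, $\tau(K)=\tau(T_{2,2a-1})+\tau(L)=a-1+\tau(L)$. If $L$ is arranged to have $\tau(L)=1$ (a Hom-type knot with $\epsilon\neq0$), then $g_4(K)\ge a$.
\item For $g_3$: additivity gives $g_3(K)=(a-1)+(b-a+1)=b$, which is an upper bound for $g_c(K)$.
\end{itemize}

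\emph{Main obstacle.} The hard step is the lower bound $g_c(K)\ge b$, which amounts to showing that the concordance genus does not drop after summing with the torus knot. The first thing I would try is a knot Floer argument: $CFK^\infty$ up to $\epsilon$-equivalence, in the manner of~\cite{Hom2015ConcordanceGenus}. Concretely, I would take $L$ to be a sum of cables or Whitehead doubles whose $\epsilon$-class has a staircase-type invariant (for example a filtration shift, or Hom's $a_1,a_2$ invariants) detecting total length $b$. For any knot $J$ concordant to $K$, the Alexander polynomial degree of $\widehat{HFK}(J)$ must then be at least $b$, and this gives $g_3(J)\ge b$. Hom's bound $g_c\ge$ the degree of the nontrivial part of the $\epsilon$-reduced complex seems the natural tool, and it is stable under tensoring with the $T_{2,2a-1}$ staircase, provided the two pieces interact predictably.

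If that fails, the fallback is an algebraic-concordance argument. Here one would choose $L$ so that its algebraic concordance class forces $\deg\Delta\ge 2b$ for every knot concordant to $K$, for instance through Livingston's Alexander-polynomial bound modulo norms. This amounts to adapting Livingston's proof of~\cite[Theorem~1.5]{Livingston2004} to include the torus summand, while $\tau$ still pins down $g_4$.
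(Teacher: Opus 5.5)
For $b>a$ your proposal has a genuine gap. The reduction to $(a,b,b)$ by adding genus-one slice knots (such as stevedore knots) is fine, and so is the case $b=a$ via $T_{2,2a+1}$.

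\textbf{The unproved lower bound.} You rightly call $g_c(T_{2,2a-1}\#L)\ge b$ the heart of the matter, but it is never proved. The knot Floer route rests on the unverified hope that some $\epsilon$-class invariant detecting $g_c(L)$ survives tensoring with the staircase of $T_{2,2a-1}$; you state this (``provided the two pieces interact predictably'') but do not check it.

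\textbf{Why the fallback fails for your $L$.} If $L$ is algebraically (a fortiori topologically) slice, then $K$ is algebraically concordant to $T_{2,2a-1}$. So any lower bound for $g_c$ that depends only on the algebraic concordance class, including Livingston's Alexander polynomial bound, gives at most $g_c(T_{2,2a-1})\le a-1$. Livingston's $(1,N,N)$ bound goes beyond algebraic concordance, and extending it across a summand that is not algebraically slice is not a routine adaptation.

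\textbf{The assumed knot $L$.} A single knot with $g_4(L)=1$, $g_c(L)=g_3(L)=b-a+1$ and $\tau(L)=1$ is assumed (``if $L$ is arranged to have $\tau(L)=1$'') rather than produced. It combines properties taken from two different families, and no cited result gives all of them at once.

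\textbf{The missing idea.} Take $L$ far from algebraically slice, so that the Alexander polynomial alone forces the concordance genus.
\begin{itemize}
\item The paper takes $A_n$ from Theorem~\ref{Stoimenow} with $\Delta_{A_n}=1-2(2-t-t^{-1})^n$, $u(A_n)\le1$ and $g_3(A_n)=n$.
\item Since $\Delta_{A_n}(-1)<0$, together with $u(A_n)\le 1$, one gets $|\sigma(A_n)|=2$ and hence $g_4(A_n)=1$. Moreover $t^n\Delta_{A_n}$ is irreducible.
\item It then sums with twist knots $T_3,\dots,T_{a+1}$. These have signature $2$ and pairwise distinct irreducible quadratic Alexander polynomials.
\item The signature gives $g_4\ge a$, and Theorem~\ref{Livingston} gives $g_c\ge\frac12\deg D=b$. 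No Floer homology or $\tau$ is needed.
\end{itemize}

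\textbf{Your template with $L=A_n$.} Your own construction would succeed with this choice. $\Delta_{T_{2,2a-1}}$ is a product of distinct cyclotomic polynomials $\Phi_{2d}$ with $1<d\mid 2a-1$, each self-reciprocal. The polynomial $t^n\Delta_{A_n}$ is irreducible with leading coefficient $\pm2$, so it is distinct from all of them. After mirroring $A_n$ so that the signatures add, the signature bound and Theorem~\ref{Livingston} give $(g_4,g_c,g_3)=(a,b,b)$ for $T_{2,2a-1}\#A_n$.
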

\smallskip
\noindent\textbf{Declaration of AI Use.}
The construction was discovered by ChatGPT. AI tools were also used to
assist with language polishing.

\section{Preliminaries}

For any knot $K\subset S^3$, we normalize the Alexander polynomial by requiring
\(
\Delta_K(t)=\Delta_K(t^{-1}),\Delta_K(1)=1.
\)
We use the following consequence of
Stoimenow's realization theorem~\cite[Theorem~3.1]{Stoimenow2010}.

\begin{theorem}\label{Stoimenow}
Let $\delta(t)\in\mathbb{Z}[t,t^{-1}]$ satisfy
\[
\delta(t)=\delta(t^{-1}),\qquad \delta(1)=1,
\]
and let $n>0$ be the largest exponent of $t$ occurring in
$\delta(t)$. Then there exists a knot $K$ such that
\[
\Delta_K(t)=\delta(t),\qquad u(K)\leq 1,\qquad g_3(K)=n.
\]
\end{theorem}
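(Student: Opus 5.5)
The plan is to deduce the statement directly from \cite[Theorem~3.1]{Stoimenow2010}, adding only a matching lower bound on the genus. I recall that theorem as realizing any admissible Alexander polynomial by an unknotting-number-one knot with a minimal-genus Seifert surface whose genus equals the degree, but I have not pinned down its exact wording, so the deduction is contingent on checking it. The argument splits into two parts: an existence statement with $g_3(K)\le n$ and $u(K)\le1$, and the general inequality $g_3(K)\ge n$.

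\emph{Lower bound.} For any knot $K$ with a Seifert surface of genus $g$ and Seifert matrix $V$ of size $2g$, we have $\Delta_K(t)\doteq\det(t^{1/2}V-t^{-1/2}V^{T})$. The symmetrized polynomial therefore has exponents in $[-g,g]$. If $\Delta_K=\delta$ and $t^n$ occurs in $\delta$, it follows that $g_3(K)\ge n$. This step is routine.

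\emph{Existence.} First I check that $\delta$ is an admissible Alexander polynomial. Write $\delta(t)=a_0+\sum_{k=1}^n a_k(t^k+t^{-k})$; the condition $\delta(1)=1$ says $a_0+2\sum a_k=1$. By Levine's characterization, every such symmetric polynomial with value $1$ at $1$ is the Alexander polynomial of some knot. Stoimenow's theorem then supplies a knot $K$ with $\Delta_K=\delta$, a Seifert surface of genus exactly $n=\deg\delta$, and a single crossing change to the unknot. The genus-$n$ surface gives $g_3(K)\le n$, and combined with the lower bound this yields $g_3(K)=n$.

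\emph{Fallback construction.} If \cite[Theorem~3.1]{Stoimenow2010} turns out to be phrased differently (for instance in terms of canonical genus, or without the unknotting statement), I would reproduce its argument. I would take a $2n\times 2n$ Seifert matrix of block form
\[
V=\begin{pmatrix} A & B\\ B^{T}-I & 0\end{pmatrix},
\]
with $A$ symmetric and $B$ chosen so that $\det(tV-V^T)=t^n\delta(t)$. Such a choice is possible by an inductive (Seifert/Levine-style) matrix construction. I would realize $V$ by a plumbed band surface of genus $n$. The zero block would be arranged so that one crossing change, between two bands, turns the surface into one bounding a disk after band surgery, which gives $u(K)\le1$.

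The main obstacle is exactly this simultaneous control: keeping the surface of genus $n$ while making a single crossing change unknot $K$. That is the content being imported from Stoimenow, and I would rely on the cited theorem for it rather than reprove it.
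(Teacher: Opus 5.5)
Your main route is the paper's: the paper gives no argument beyond citing Stoimenow's Theorem~3.1. Your degree bound $g_3(K)\ge n$ is the right routine supplement. It comes from $\det(t^{1/2}V-t^{-1/2}V^T)$ having exponents in $[-g,g]$, and it upgrades ``a genus-$n$ Seifert surface'' to $g_3(K)=n$.

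Your fallback construction, however, fails already at the level of the Alexander polynomial. Because the lower-right block of $V$ is zero, $A$ drops out of the determinant entirely:
\[
\det(tV-V^T)=(-1)^n\det\bigl(I+(t-1)B\bigr)\det\bigl((t-1)B^T-tI\bigr)=t^n f(t)f(t^{-1}),
\]
where $f(t)=\det\bigl(I+(t-1)B\bigr)$ and $f(1)=1$. So this block form only produces polynomials of Fox--Milnor type $f(t)f(t^{-1})$. Equivalently, the last $n$ basis vectors span a metabolizer of $V$, so any knot realizing $V$ is algebraically slice and has signature $0$.

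This excludes most admissible $\delta$, since the value at $-1$ would be $f(-1)^2>0$. Already the trefoil polynomial has value $-3$ there. The polynomials $\delta_n$ that the paper feeds into this theorem have $\delta_n(-1)=1-2\cdot 4^n<0$, are irreducible, and must be realized by knots of signature $\pm2$.

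A Seifert-type realization of a general $\delta$ needs a non-metabolic matrix, for example one with a nonzero lower-right block as in Seifert's original construction. Then the zero block is no longer available as the mechanism for your single crossing change. So the fallback would not rescue the argument if the citation turned out to be phrased differently. The simultaneous control of genus $n$ and $u(K)\le 1$ genuinely rests on Stoimenow's theorem, which is exactly how the paper treats it.
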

For a knot $K\subset S^3$, let $D_K(t)=t^d\cdot\Delta_K(t)$ where $d$ is the largest exponent of \(t\) occurring in $\Delta_K(t)$.
Then $D_K(t)\in\mathbb{Z}[t]$ and $D_K(0)\neq0$.
We use the following special case of
Livingston's Alexander polynomial bound
~\cite[Theorem~2.2]{Livingston2004}.

\begin{theorem}\label{Livingston}
Suppose that every irreducible factor of $D_K(t)$
is self-reciprocal and occurs with multiplicity one. Then
\[
g_c(K)\geq \frac12\deg D_K(t).
\]
Here, a polynomial is called self-reciprocal if
\(p(t)=t^{\deg p}p(t^{-1})\).
\end{theorem}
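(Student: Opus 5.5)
The statement to prove is Theorem~\ref{Livingston} (as it sits), but it is quoted from \cite{Livingston2004}, so I will sketch why it holds. Suppose $K$ is concordant to $J$ with $g_3(J)=g$. The idea is to use the Fox--Milnor condition and then make the Alexander polynomial of $J$ absorb every irreducible factor of $D_K$.

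First, concordance gives
\[
\Delta_K(t)\Delta_J(t)=f(t)f(t^{-1})
\]
up to units, for some $f\in\mathbb{Z}[t,t^{-1}]$. Multiplying by powers of $t$, this becomes $D_K(t)D_J(t)=\pm t^k f(t)\bar f(t)$, where $\bar f$ is the reciprocal of $f$ in $\mathbb{Z}[t]$.

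Next, let $p$ be an irreducible factor of $D_K$. By hypothesis $p$ is self-reciprocal and divides $D_K$ exactly once. Note that $p\ne t$, since $D_K(0)\ne 0$. The right-hand side is a norm, so in it the multiplicity of $p$ equals $2v_p(f)$, which is even. This uses the fact that $p$ divides $\bar f$ exactly as often as $p$ divides $f$, because $p$ is self-reciprocal. Hence $v_p(D_J)$ must be odd, in particular $\ge 1$.

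Since the distinct irreducible factors of $D_K$ are pairwise coprime and each has multiplicity one, their product $D_K$ divides $D_J$ up to sign. Therefore $\deg D_K\le \deg D_J$. Seifert's bound $\deg D_J=2\cdot(\text{top exponent of }\Delta_J)\le 2g_3(J)$ then gives $g\ge \tfrac12\deg D_K$. Minimizing over all $J$ concordant to $K$ proves the theorem.

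The main obstacle is the parity step: making sure $p$ appears in $\bar f$ with the same multiplicity as in $f$, and handling content and sign issues, all inside the UFD $\mathbb{Z}[t]$. I would settle this first, working over $\mathbb{Q}[t]$ and using Gauss's lemma to return to $\mathbb{Z}[t]$.
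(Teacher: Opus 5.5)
Your argument is correct and is essentially Livingston's own proof of \cite[Theorem~2.2]{Livingston2004}, which the paper only cites without reproving it. The steps match: Fox--Milnor gives $D_K D_J=\pm t^k f\bar f$, each self-reciprocal multiplicity-one factor of $D_K$ must then divide $D_J$ to odd order, and Seifert's bound $\deg D_J\le 2g_3(J)$ finishes it. The content issue you flag is harmless, because $D_K(1)=1$ forces $D_K$ to be primitive, and in any case the degree count can be done in $\mathbb{Q}[t]$.
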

\section{Building blocks}

For each positive integer \(n\), consider the normalized Laurent polynomial
\[\delta_n(t)=1-2{(2-t-t^{-1})}^n.\]
The polynomial \(\delta_n(t)\) satisfies $\delta_n(t^{-1})=\delta_n(t)$ and $\delta_n(1)=1$.
By Theorem~\ref{Stoimenow}, there exists a knot \(A_n\) with
\[\Delta_{A_n}(t)=\delta_n(t),\quad u(A_n)\leq1,\quad g_3(A_n)=n.\]

\begin{lemma}
    \(\lvert\sigma(A_n)\rvert=2\) and \(g_4(A_n)=1\). 
\end{lemma}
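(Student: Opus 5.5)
The plan is to get $|\sigma(A_n)|=2$ from the Alexander polynomial and the unknotting bound, and then read off the slice genus from these bounds.

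\textbf{Upper bounds.} Since $u(A_n)\le 1$, a single crossing change turns $A_n$ into the unknot. Such a crossing change gives a genus one cobordism, so $g_4(A_n)\le u(A_n)\le 1$. A crossing change also changes the signature by at most $2$, so $|\sigma(A_n)|\le 2$.

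\textbf{Signature is nonzero.} I would use the Levine--Tristram signature function $\sigma_\omega$ for $\omega=e^{i\theta}$ on the unit circle. Put $x=2-t-t^{-1}$, so that $x=2-2\cos\theta\in[0,4]$ on the circle. Then $\delta_n=1-2x^n$.
\begin{itemize}
\item Near $\omega=1$ we have $x\approx 0$, so $\Delta(\omega)>0$.
\item At $\omega=-1$ we have $x=4$, so $\Delta(-1)=1-2\cdot 4^n<0$.
\end{itemize}
Since $x$ increases monotonically from $0$ to $4$ as $\theta$ goes from $0$ to $\pi$, the real function $\delta_n(e^{i\theta})$ has exactly one simple zero on $(0,\pi)$. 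It sits at $x=2^{-1/n}$, and it is simple because $d(x^n)/dx\ne0$ there and $dx/d\theta\ne0$ for $\theta\in(0,\pi)$.

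The function $\sigma_\omega$ is locally constant away from roots of $\Delta$ and equals $0$ near $\omega=1$. At a simple root on the circle it jumps by $\pm2$. (Equivalently, the zero-counting parity argument shows it must change, since $\det$ of $(1-\omega)V+(1-\bar\omega)V^T$ changes sign.) Hence $\sigma_{-1}=\sigma(A_n)=\pm2$.

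I would state this cleanly as a sign argument. Let $V$ be a Seifert matrix of size $2g$. Then $\det\bigl((1-\omega)V+(1-\bar\omega)V^T\bigr)$ is a positive multiple of $\Delta(\omega)$ up to the normalization (a factor like $|1-\omega|^{2g}$). The signature $\sigma_\omega$ satisfies $\sigma_\omega\equiv 2g \pmod 4$ exactly when this determinant has the sign $(-1)^{g}$ times that near $\omega=1$, that is, $(-1)^{(2g-\sigma_\omega)/2}$ equals the sign of the determinant. Near $1$ the signature is $0$ and the sign is positive. At $-1$ the sign is negative, so $\sigma\equiv 2\pmod 4$, hence $\sigma\neq 0$. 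Combined with $|\sigma|\le 2$, this forces $|\sigma(A_n)|=2$.

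Making this sign bookkeeping precise is the main technical point. The simplest route is the classical fact that the sign of $\Delta_K(-1)$, normalized symmetrically with $\Delta(1)=1$, determines $\sigma(K)\bmod 4$: for the symmetrized Conway-normalized polynomial, $\det K=|\Delta(-1)|$ and $\sigma\equiv 0\pmod 4$ iff $\Delta(-1)>0$. Here $\Delta(-1)=1-2\cdot4^n<0$, so $\sigma\equiv2\pmod4$, which suffices.

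\textbf{Slice genus.} Murasugi's bound gives $g_4\ge|\sigma|/2=1$ (this holds in the smooth, indeed topological, category). Together with $g_4(A_n)\le 1$ this yields $g_4(A_n)=1$.
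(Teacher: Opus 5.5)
Your argument is correct and is essentially the paper's proof: $\Delta_{A_n}(-1)=1-2\cdot 4^n<0$ gives $(-1)^{\sigma(A_n)/2}=-1$, hence $\sigma(A_n)\equiv 2\pmod 4$, and then $2\le|\sigma(A_n)|\le 2g_4(A_n)\le 2u(A_n)\le 2$. One small correction to your optional bookkeeping paragraph: $\det\bigl((1-\omega)V+(1-\bar\omega)V^T\bigr)=(-1)^g|1-\omega|^{2g}\Delta(\omega)$, so it is not in general a positive multiple of $\Delta(\omega)$, although the relation $(-1)^{\sigma_\omega/2}=\operatorname{sgn}\Delta(\omega)$ that you ultimately rely on is correct.
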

\begin{proof}
Since $\Delta_{A_n}(-1)=1-2\cdot 4^n<0$, we have
\[
{(-1)}^{\sigma(A_n)/2}
=\operatorname{sgn}\Delta_{A_n}(-1)=-1.
\]
Thus $\sigma(A_n)\equiv 2\pmod 4$, and hence
\[
2\leq |\sigma(A_n)|
\leq 2g_4(A_n)
\leq 2u(A_n)
\leq 2.
\]
Therefore $|\sigma(A_n)|=2$ and $g_4(A_n)=1$.
\end{proof}
Replacing $A_n$ by its mirror image if necessary,
we assume that $\sigma(A_n)=2$.

Set \(P_n(t):=t^n\cdot\delta_n(t)=t^n+2{(-1)}^{n+1}{(t-1)}^{2n}\).
\begin{lemma}
    The polynomial $P_n(t)$ is irreducible over $\mathbb{Q}$.
\end{lemma}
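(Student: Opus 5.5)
The plan is to use the palindromic structure of $P_n$. Since $P_n(t)=t^n\delta_n(t)$ and $\delta_n(t)$ is a polynomial in $x:=t+t^{-1}$, namely
\[
\delta_n(t)=g(t+t^{-1}),\qquad g(x):=1-2(2-x)^n\in\mathbb{Z}[x],
\]
any root $\tau$ of $P_n$ satisfies $\tau^2-\xi\tau+1=0$, where $\xi=\tau+\tau^{-1}$ is a root of $g$. Because $\deg P_n=2n$, it suffices to show $[\mathbb{Q}(\tau):\mathbb{Q}]=2n$. For this I will prove two facts: $g$ is irreducible of degree $n$, so $[\mathbb{Q}(\xi):\mathbb{Q}]=n$, and $\tau\notin\mathbb{Q}(\xi)$, so $[\mathbb{Q}(\tau):\mathbb{Q}(\xi)]=2$.

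\emph{Step 1: irreducibility of $g$.} Substitute $y=2-x$, so $g$ becomes $1-2y^n$. Its reciprocal polynomial $y^n-2$ is Eisenstein at $2$, hence irreducible. Therefore $1-2y^n$ is irreducible over $\mathbb{Q}$, since reversal preserves irreducibility when the constant term is nonzero. It follows that $g$ is irreducible of degree $n$.

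\emph{Step 2: $\tau\notin\mathbb{Q}(\xi)$.} The quadratic $T^2-\xi T+1$ splits over $K=\mathbb{Q}(\xi)$ if and only if its discriminant $\xi^2-4$ is a square in $K$. If it were a square, its norm $N_{K/\mathbb{Q}}(\xi^2-4)$ would be a square in $\mathbb{Q}$, in particular nonnegative. I will compute this norm from the values of $g$. Writing $g(x)=\ell\prod_{i=1}^n(x-\xi_i)$ with leading coefficient $\ell=-2(-1)^n$, we get
\[
N_{K/\mathbb{Q}}(\xi^2-4)=\prod_{i}(\xi_i-2)(\xi_i+2)=\frac{g(2)\,g(-2)}{\ell^2}.
\]
Here the signs $(-1)^n$ from the two products cancel. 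Since $g(2)=1$ and $g(-2)=1-2\cdot4^n$, this gives
\[
N_{K/\mathbb{Q}}(\xi^2-4)=\frac{1-2\cdot 4^n}{4}<0.
\]
This is not a rational square, so $\xi^2-4$ is not a square in $K$. Note that $g(-2)=\Delta_{A_n}(-1)$, the same quantity used in the signature lemma.

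\emph{Conclusion.} We obtain $[\mathbb{Q}(\tau):\mathbb{Q}]=[\mathbb{Q}(\tau):\mathbb{Q}(\xi)]\cdot[\mathbb{Q}(\xi):\mathbb{Q}]=2n=\deg P_n$. Hence the minimal polynomial of $\tau$ is $P_n$ up to a scalar, and $P_n$ is irreducible over $\mathbb{Q}$.

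The main obstacle I expect is Step 2: the bookkeeping of the norm formula, meaning the leading-coefficient factor and the signs. Only the sign of the norm matters, and it is negative regardless, so the argument is robust. I first considered a $2$-adic Newton polygon argument, but it only splits $P_n$ into two irreducible degree-$n$ factors over $\mathbb{Q}_2$ and does not rule out a rational factorization. The norm argument avoids that gap.
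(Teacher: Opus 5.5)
Your proof is correct, and its structure matches the paper's. Both use the tower $\mathbb{Q}\subset\mathbb{Q}(\tau+\tau^{-1})\subset\mathbb{Q}(\tau)$, with the bottom layer handled by the same Eisenstein polynomial $y^n-2$. You differ only in the top layer, that is, in showing that $T^2-\xi T+1$ has no root in $\mathbb{Q}(\xi)$.

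The paper avoids any computation by working with one well-chosen root. It takes $\alpha=2^{-1/n}\in(0,1)$, which amounts to choosing the real root $\xi=2-\alpha$ of your $g$. The middle field then lies in $\mathbb{R}$, and the discriminant $(2-\alpha)^2-4$ is negative because $1<2-\alpha<2$. So the roots of the quadratic are non-real and cannot lie in $\mathbb{Q}(\alpha)$; the paper leaves this last justification implicit.

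Your norm argument instead treats an arbitrary root and is purely algebraic, at the cost of the bookkeeping with $g(2)g(-2)/\ell^2$. The two arguments rest on the same fact. Non-real conjugates pair up and contribute positive factors $|\xi_i^2-4|^2$, so the negativity of $N_{K/\mathbb{Q}}(\xi^2-4)$ forces an odd number of real conjugates with $\xi_i^2<4$. That is exactly what the paper exhibits directly.

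The paper's route is shorter. Yours makes every step explicit and ties the key sign to $g(-2)=\Delta_{A_n}(-1)$, the same quantity used in the signature lemma.
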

\begin{proof}
    Set \(\alpha=2^{-\frac{1}{n}}\in(0,1)\). Since \(x^n-2\) is irreducible, \([\mathbb{Q}(\alpha):\mathbb{Q}]=n\).
    Let \(\xi\) be a root of
    \[x^2-(2-\alpha)x+1=0.\]
    This quadratic is irreducible over \(\mathbb{Q}(\alpha)\), so \([\mathbb{Q}(\xi):\mathbb{Q}]=2n\).
    On the other hand, \(\delta_n(\xi)=1-2{(2-\xi-\xi^{-1})}^n=1-2\alpha^n=0\).
    Thus \(P_n(\xi)=0\) and \(P_n\) is irreducible.
\end{proof}

For each integer $m\geq3$, let $T_m$ be the twist knot
with a negative clasp and $2m-1$ half twists. When $m=1$, this is the left-handed trefoil.
\begin{lemma}
    A Seifert matrix of \(T_m\) is 
    \[
    V_m=
    \begin{pmatrix}
    1&1\\
    0&m
    \end{pmatrix}.
    \] 
    Moreover, \(\Delta_{T_m}(t)=m(t+t^{-1})-(2m-1)\). And $\sigma(T_m)/2=g_4(T_m)=g_3(T_m)=1$.
\end{lemma}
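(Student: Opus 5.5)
We verify the three assertions of the lemma in turn, using the standard genus-one Seifert surface of a twist knot. The Seifert surface of $T_m$ obtained from its standard diagram is an annulus band $b_1$ and a twisted band $b_2$ joined at the clasp, with genus one. Take generators $x_1,x_2$ of $H_1$ of this surface, where $x_1$ runs through the clasp band and $x_2$ runs through the twisted band. With orientations chosen appropriately, and with the sign conventions fixed so that $m=1$ gives the left-handed trefoil:
\begin{itemize}
\item The negative clasp gives $\operatorname{lk}(x_1,x_1^+)=1$.
\item The $2m-1$ half twists give a band framing of $\operatorname{lk}(x_2,x_2^+)=m$.
\item The crossing of the two cores gives off-diagonal linking numbers $1$ and $0$.
\end{itemize}
This yields the matrix $V_m$ as stated. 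An alternative is to accept the matrix up to S-equivalence by citing the standard twist-knot Seifert matrix (e.g.\ Rolfsen/Lickorish), adjusted by mirroring. The fiddly point is fixing orientations and signs consistently. I would calibrate them against the case $m=1$, the left-handed trefoil, whose signature is known.

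Next, the Alexander polynomial. We have $\det(V_m-tV_m^T)=\det\begin{pmatrix}1-t&1\\-t&m(1-t)\end{pmatrix}=m(1-t)^2+t$. Normalizing by $t^{-1}$ gives $m(t+t^{-1})-2m+1$. This is symmetric and takes the value $1$ at $t=1$, so $\Delta_{T_m}(t)=m(t+t^{-1})-(2m-1)$.

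Finally, the genera and signature. The symmetrized form is $V_m+V_m^T=\begin{pmatrix}2&1\\1&2m\end{pmatrix}$. It has positive trace and determinant $4m-1>0$, so it is positive definite and $\sigma(T_m)=2$. The Seifert surface has genus one, so $g_3(T_m)\le 1$. Then
\[
1=\tfrac{|\sigma(T_m)|}{2}\le g_4(T_m)\le g_3(T_m)\le 1,
\]
which forces all three quantities to equal $1$. The only real obstacle is the sign bookkeeping in reading off $V_m$: it determines the sign of the signature, and $V_m$ (rather than its transpose or a negated version) is what later computations will use.
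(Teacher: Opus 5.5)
Your overall route is the paper's: read $V_m$ off the standard genus-one surface, compute $\Delta_{T_m}$ as a determinant, get $\sigma(T_m)=2$ from positive definiteness of $V_m+V_m^T$, and sandwich $1\le g_4(T_m)\le g_3(T_m)\le 1$. Your trace and $\det=4m-1$ check makes the definiteness explicit, and all of these later steps are correct. The one step that fails as written is your source for the entry $m$: a band with $2m-1$ half twists does not have framing $m$. The framing of a band is half its number of half twists, and with an odd number the disk-plus-band surface would not even be orientable. So the diagram of $T_m$ with $2m-1$ half twists is not directly the ``clasp band plus twisted band'' surface you describe.

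What is missing is a preliminary isotopy, which the paper's phrase ``a disk with two bands, one of which has $m$ full twists'' silently uses. The clasp tangle is isotopic rel boundary to one extra half twist followed by a clasp of the opposite handedness; this is the rational tangle identity $\tfrac12=1+\tfrac{1}{-2}$. Hence $T_m$ is also the twist knot with $2m$ half twists and the reversed clasp, since both tangles have fraction $2m-1+\tfrac12=2m-\tfrac12$. In that picture the standard surface is a plumbing of a band with $m$ full twists and a band with one full twist of the same handedness. This gives the diagonal entries $m$ and $1$, and the single intersection of the two cores at the plumbing square gives the off-diagonal entries $1$ and $0$.

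Since the clasp changes handedness under this move, the sign of the $(1,1)$ entry must be read off the redrawn clasp, not attributed to the negative clasp of the original diagram. Your calibration at $m=1$ then settles the signs for every $m$: the Alexander polynomial separates the trefoil from the figure-eight, and the trefoil's signature fixes the overall sign. This works uniformly because the handedness of the twists and the clasp does not depend on $m$. With this repair your argument coincides with the paper's.
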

\begin{proof}
    Take the standard genus-one Seifert surface of \(T_m\), viewed as a disk with two bands, one of which has \(m\) full twists.
    Let \(\alpha\) and \(\beta\) be the standard basis curves associated with the two bands, chosen so that the \(\alpha\)-band has one positive full twist, and the \(\beta\)-band has $m$ positive full twists.
    The two bands meet in the usual single clasp.
    We have \(\operatorname{lk}(\alpha^+,\alpha)=1\), \(\operatorname{lk}(\alpha^+,\beta)=1\), \(\operatorname{lk}(\beta^+,\alpha)=0\), \(\operatorname{lk}(\beta^+,\beta)=m\).
    These linking numbers give the Seifert matrix \(V_m\) above.
    The Alexander polynomial is therefore
    \(\Delta_{T_m}(t)=\det(t^{\frac{1}{2}}V_m-t^{-\frac{1}{2}}V_m^T)=m(t+t^{-1})-(2m-1)\).
    Since the symmetrized matrix $V_m+V_m^T$ is positive definite, we have $\sigma(T_m)=2$. As \(g_3(T_m)=1\), we have \(g_4(T_m)=1\).
\end{proof}

Set $Q_m(t):=t\Delta_{T_m}(t)=mt^2-(2m-1)t+m$, which is irreducible when $m\geq3$.

Let $S=6_1$ be the stevedore knot. Its invariants are
\[
g_3(S)=1,\qquad g_4(S)=g_c(S)=0,\qquad
\sigma(S)=0;
\]
see~\cite{KnotAtlas}.

\section{Proof of Theorem~\ref{thm:main}}
Now fix $1\leq a\leq b\leq c$ and put
\[n=b-a+1,\qquad r=c-b.\]

Define
\[J_{a,b}=A_n\#(\#^{a+1}_{m=3}T_m)\]
and
\[K_{a,b,c}=J_{a,b}\#(\#^r S).\]

\begin{proposition}
    $g_4(K_{a,b,c})=a$, $g_c(K_{a,b,c})=b$ and $g_3(K_{a,b,c})=c$.
\end{proposition}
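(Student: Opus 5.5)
The plan is to compute each of the three genera separately. The lower bounds come from the signature and from Theorem~\ref{Livingston}; the upper bounds come from additivity or subadditivity under connected sum. Throughout, note that the connected sum $\#_{m=3}^{a+1}T_m$ has exactly $a-1$ summands, and is empty when $a=1$.

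\emph{Seifert genus.} Seifert genus is additive under connected sum. Using $g_3(A_n)=n$, $g_3(T_m)=1$ and $g_3(S)=1$, we get
\[
g_3(K_{a,b,c})=n+(a-1)+r=(b-a+1)+(a-1)+(c-b)=c .
\]

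\emph{Slice genus.} The signature is additive. By the lemmas above, $\sigma(A_n)=2$ (after mirroring), $\sigma(T_m)=2$ and $\sigma(S)=0$. Hence $\sigma(K_{a,b,c})=2+2(a-1)=2a$, and $|\sigma|\le 2g_4$ gives $g_4(K_{a,b,c})\ge a$. Conversely, $g_4$ is subadditive, $g_4(A_n)=1$, $g_4(T_m)=1$ and $g_4(S)=0$. So $g_4(K_{a,b,c})\le 1+(a-1)=a$.

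\emph{Concordance genus.} Since $S$ is smoothly slice, $K_{a,b,c}$ is smoothly concordant to $J_{a,b}$, and therefore $g_c(K_{a,b,c})=g_c(J_{a,b})$. For the upper bound,
\[
g_c(J_{a,b})\le g_3(J_{a,b})=n+a-1=b .
\]
For the lower bound I will apply Theorem~\ref{Livingston} to $J_{a,b}$. Up to sign, we have
\[
D_{J_{a,b}}(t)=P_n(t)\prod_{m=3}^{a+1}Q_m(t),
\]
which has degree $2n+2(a-1)=2b$. Theorem~\ref{Livingston} would then give $g_c(J_{a,b})\ge b$, provided its hypothesis holds.

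The main work, and the step I expect to be the main obstacle, is checking that hypothesis: every irreducible factor of $D_{J_{a,b}}$ must be self-reciprocal and occur with multiplicity one. The checks are as follows.

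\begin{itemize}
\item $P_n$ is irreducible over $\mathbb{Q}$ by the lemma. It is primitive, since its $t^n$ coefficient is $1\pm 2\binom{2n}{n}$, which is odd. It is self-reciprocal because $\delta_n$ is symmetric.
\item Each $Q_m$ with $m\ge3$ is irreducible, because its discriminant $(2m-1)^2-4m^2=1-4m$ is negative. It is primitive, since $\gcd(m,2m-1)=1$, and it is visibly palindromic.
\item The $Q_m$ are pairwise distinct, and not associate to one another, because their leading coefficients $m$ differ.
\item $Q_m\ne \pm P_n$. For $n\ge2$ this holds by degree, and for $n=1$ it holds because $P_1=2t^2-3t+2$ has leading coefficient $2<3\le m$.
\end{itemize}

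Thus the factorization consists of distinct self-reciprocal irreducibles, each with multiplicity one. Theorem~\ref{Livingston} then yields $g_c(J_{a,b})\ge b$, which completes the proof.
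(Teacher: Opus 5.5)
Your proposal is correct and follows the paper's proof essentially step for step. You use the signature together with subadditivity for $g_4$, concordance to $J_{a,b}$ plus Theorem~\ref{Livingston} applied to $D_{J_{a,b}}=P_n\prod_{m=3}^{a+1}Q_m$ for $g_c$, and additivity for $g_3$. Your extra checks (primitivity, the discriminant $1-4m<0$, and comparing leading coefficients to separate $P_1=Q_2$ from the $Q_m$) simply make explicit what the paper asserts in one line, and ``up to sign'' is unnecessary because the normalization $\Delta(1)=1$ makes the product exactly $D_{J_{a,b}}$.
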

\begin{proof}
    By subadditivity of the slice genus under connected sum, we have
    \[g_4(K_{a,b,c})\leq g_4(A_n)+\sum_{m=3}^{a+1}g_4(T_m)+rg_4(S)=1+a-1=a.\]
    By additivity of the signature under connected sum, we also have
    \[\sigma(K_{a,b,c})=2+2(a-1)=2a.\]
    The signature bound gives the reverse inequality, so
    $g_4(K_{a,b,c})=a$.

    Since $S$ is slice, the knots $K_{a,b,c}$ and $J_{a,b}$
    are smoothly concordant. Hence
    \[
    g_c(K_{a,b,c})=g_c(J_{a,b}).
    \]
    For the upper bound, $g_c(J_{a,b})\leq g_3(J_{a,b})=b$.
    For the lower bound, we apply Theorem~\ref{Livingston} to $D_{J_{a,b}}(t)$.
    \[D_{J_{a,b}}(t)=P_n(t)\prod_{m=3}^{a+1} Q_m(t).\]
    The factors $P_n,Q_3,\ldots,Q_{a+1}$ are pairwise distinct
    irreducible self-reciprocal polynomials, each occurring
    with multiplicity one. In the case \(n=1\), we have \(P_1(t)=Q_2(t)\), which is still distinct from \(Q_3(t),\ldots,Q_{a+1}(t)\).
    Hence Theorem~\ref{Livingston} gives
    \[g_c(J_{a,b})\geq\frac{2n+2(a-1)}{2}=b.\]

    For the Seifert genus, we have
    \[g_3(K_{a,b,c})=g_3(A_n)+\sum_{m=3}^{a+1}g_3(T_m)+r\cdot g_3(S)=n+a-1+r=c.\]
\end{proof}

\bibliographystyle{alpha}
\bibliography{references}
\end{document}